\UseRawInputEncoding

\newenvironment{dedication}
{
\itshape
\raggedleft
}
{\par
\vspace{3mm}
}

\documentclass{amsart}
\usepackage{amsmath,amssymb}

\newtheorem{theorem}{Theorem}

\usepackage{epsfig}
\usepackage{url}
\usepackage{amssymb}
\usepackage{amsmath}
\usepackage{amsfonts}

\def\Dbar{\leavevmode\lower.6ex\hbox to 0pt{\hskip-.23ex \accent"16\hss}D}

\def\bZ{{\mbox{\bf Z}}}

\begin{document}

\title{Orthogonal designs and symmetric Hadamard matrices}

\author {Dragomir {\v{Z}}. {\Dbar}okovi{\'c}}
\address{University of Waterloo,
Department of Pure Mathematics,
Waterloo, Ontario, N2L 3G1, Canada}
\email{dragomir@rogers.com}
\date{}

\begin{abstract}
For a prime power $q\equiv 3 \pmod 8$ we construct skew-type and symmetric
orthogonal designs with parameters $(1+q:~1,q)$.
Our main result is the construction of symmetric Hadamard matrices of order
$q(1+q)$ for the same values of $q$.
\end{abstract}

\maketitle

\begin{dedication}
In memory of Nikolay Alekseevich Balonin, my friend and collaborator.
\end{dedication}

\section{Introduction}
For a square matrix we say that it is of {\em skew type} if all diagonal
entries are equal to each other and if we replace all diagonal entries
by 0 the matrix becomes skew-symmetric.
Let $q\equiv 3 \pmod 8$ be a prime power. We construct a symmetric
and a skew-type orthogonal designs of order $1+q$.

By using the symmetric orthogonal design of order $1+q$, we construct 
symmetric Hadamard matrix of order $q(1+q)$. This gives a new infinite series
of symmetric Hadamard matrices. In particular, the cases $q=27$ and $q=43$
give the first examples of symmetric Hadamard matrices of orders $4\cdot 189$
and $4\cdot 473$. Let us mention that if $q\equiv 3 \pmod 4$ is a
prime power then there exists a Hadamard matrix of order $q(1+q)$
(see \cite[Corollary 1]{Djokovic:LAMA:2017}).

In the last section we present some new results on Legendre pairs of length
$111$.
\section{Two arrays}
The Goethals-Seidel array (GS-array) was originally invented
in 1970 by J. M. Goethals and J. J. Seidel \cite{GS:JAMS:1970} in order to
construct a first example of a skew-Hadamard matrix of order 36. 
Since that time, it has been used extensively by many researchers to
construct several infinite series of Hadamard matrices of
various kinds. However the GS-array is not suitable for the construction
of symmetric Hadamard matrices. For the construction of 
this type of matrices there is a different array discovered by N. A. Balonin
in 2014, who gave it the name {\em propus array}.
In 2017 it was published in the joint paper of J. Seberry and N. A. Balonin
\cite{SB:2017} and used to construct two infinite series of symmetric Hadamard
matrices. In my opinion that array should be called 
{\em Balonin array}, and I will use this new name in this article. 
For reader's convenience we display these two arrays:

$$ \left[ \begin{array}{cccc}
       A &  BR &  CR &  DR \\
     -BR &  A  & -RD &  RC \\
     -CR &  RD &  A  & -RB \\
     -DR & -RC &  RB &  A  \\ \end{array} \right] {\rm Goethals-Seidel~ array,} $$

$$ \left[ \begin{array}{cccc}
      -A &  BR &  CR &  DR \\
      BR &  RD &  A  & -RC \\
      CR &  A  & -RD &  RB \\
      DR & -RC &  RB &  A  \\ \end{array} \right]  {\rm Balonin~ array.} $$

In both arrays the matrices $A,B,C,D$ of order $v$ are circulants and 
$R$ is the back-circulant identity matrix of the same order. 
The matrices $A,B,C,D$ are usually constructed from difference families,
with suitable parameters, in cyclic groups. However, these arrays can
be adapted to construct Hadamard matrices by using suitable 
difference families over an arbitrary finite abelian group $G$ of order $v$.
In that case the matrices $A,B,C,D$ should be $G$-invariant.
This means that if, for instance, $A=[a_{x,y}]$ with $x,y\in G$ then
$a_{x+z,y+z}=a_{x,y}$ for all $x,y,z\in G$.
The matrix $R$ also must be modified accordingly. 
In all cases $R$ is an involutive and symmetric permutation matrix. Moreover,
if a matrix $A$ is G-invariant then $RAR=A^T$ where $T$ denotes transposition.
For more details see \cite[Section 9]{DK:Computational methods:2019}. 
(There is a misprint there in formula (24): the letter A should be
replaced by G.) 

\section{Two infinite series of orthogonal designs}

We denote by $I_n$ the identity matrix of order $n$.
Let $x_1,x_2,\ldots,x_s$ be commuting independent variables.
Let us consider square matrices $X$ of order $n$ whose entries
belong to $\{0,\pm x_1,\pm x_2,\ldots,\pm x_s\}$. We say that 
such a matrix is an {\em orthogonal design} (OD) if
$$
XX^T=(\sum_{i=1}^s k_i x_i^2)I_n
$$
for some integers $k_i > 0$. In that case we say that the orthogonal design
$X$ has parameters $(n;~ k_1,k_2,\ldots,k_s)$.  
For the theory of orthogonal designs we refer the interested reader to 
\cite{CK:2007} and \cite{JS:2017}.

\begin{theorem}
If $q\equiv 3 \pmod{8}$ is a prime power then there exist
an OD $X$ of skew type with parameters $(1+q:~ 1,q)$ and a symmetric OD $Y$
with the same parameters.
\end{theorem}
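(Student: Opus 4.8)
The plan is to realize both designs over the field $\mathbb{F}_q$ using the quadratic character $\chi$ (with $\chi(0)=0$), exploiting that $q\equiv 3\pmod 8$ forces $\chi(-1)=-1$. Let $J$ denote the all-ones matrix and $\mathbf{1}$ the all-ones vector, and recall the Jacobsthal matrix $Q=[\chi(x-y)]_{x,y\in\mathbb{F}_q}$ of order $q$. The facts I would quote or check first are the standard character-sum identities: $Q$ has zero diagonal and zero row sums ($Q\mathbf{1}=0$), $QQ^T=qI_q-J$, and---because $\chi(-1)=-1$---that $Q$ is skew-symmetric, $Q^T=-Q$. These follow from $\sum_{t}\chi((t+\alpha)(t+\beta))=-1$ for $\alpha\neq\beta$ together with $\sum_t\chi(t)=0$.

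For the skew-type design I would border $Q$: index rows and columns by $\{\infty\}\cup\mathbb{F}_q$ and set
$$ X=\begin{bmatrix} a & b\mathbf{1}^T \\ -b\mathbf{1} & aI_q+bQ \end{bmatrix}. $$
Every diagonal entry equals $a$ (as $Q$ has zero diagonal), and $X-aI_{1+q}$ is skew-symmetric because $Q$ is; thus $X$ is of skew type. Writing $X=aI+N$ with $N=X-aI$ skew, one gets $XX^T=a^2I+NN^T$, and a block computation using $Q\mathbf{1}=0$ and $QQ^T=qI_q-J$ gives $NN^T=qb^2I_{1+q}$, hence $XX^T=(a^2+qb^2)I$. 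Counting nonzeros (one $a$ per row, on the diagonal, and $q$ entries $\pm b$) confirms the parameters $(1+q:\,1,q)$.

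The symmetric design is the only real obstacle, since the single slot holding $a$ can no longer sit entirely on the diagonal once the off-diagonal core is symmetric. My idea is to use the symmetric analogue $S=[\chi(x+y)]_{x,y\in\mathbb{F}_q}$ in place of $Q$. As $(x,y)\mapsto x+y$ is symmetric, $S=S^T$; the same character sums give $S\mathbf{1}=0$ and $S^2=SS^T=qI_q-J$, while the unique zero in row $x$ now sits at $y=-x$. I would then place the single variable $a$ exactly on these zero positions: with $P_0$ the (symmetric, involutory) permutation matrix of $x\mapsto -x$, set
$$ Y=\begin{bmatrix} a & b\mathbf{1}^T \\ b\mathbf{1} & bS-aP_0 \end{bmatrix}. $$
This $Y$ is symmetric, with one $\pm a$ and $q$ entries $\pm b$ in each row, so the parameters are again $(1+q:\,1,q)$.

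The crux is the verification $Y^2=(a^2+qb^2)I$. The border and corner reproduce $a^2+qb^2$ on the diagonal and, via $\mathbf{1}^TS=0$ and $\mathbf{1}^TP_0=\mathbf{1}^T$, annihilate the off-diagonal border blocks. In the core one expands $(bS-aP_0)^2=b^2S^2+a^2P_0^2-ab(P_0S+SP_0)$; the pieces $b^2(qI-J)$ and $a^2I$ combine with the $b^2J$ from the border to give $(a^2+qb^2)I_q$, provided the cross term vanishes. That cross term is where $\chi(-1)=-1$ enters: since $(P_0S)_{x,y}=\chi(y-x)$ and $(SP_0)_{x,y}=\chi(x-y)$, their sum is $\chi(x-y)\bigl(1+\chi(-1)\bigr)=0$, so $P_0$ and $S$ anticommute and the $ab$-terms cancel identically. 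I expect this anticommutation $P_0S=-SP_0$ to be the key step, everything else being bookkeeping with the two character-sum identities. Since only $\chi(-1)=-1$, i.e. $q\equiv 3\pmod 4$, is actually used, I would anticipate the finer hypothesis $q\equiv 3\pmod 8$ (equivalently $\chi(2)=-1$) to be required not for the designs themselves but for the later construction of the symmetric Hadamard matrix of order $q(1+q)$.
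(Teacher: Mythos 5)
Your proposal is correct, and it takes a genuinely different route from the paper. The paper does not construct the designs directly from character sums: it invokes the main theorem of \cite{XXSW:2005} to obtain, for odd $n=(1+q)/4$, a difference family in $\bZ_n$ with four blocks $X_0,X_1,X_2,X_3$ in which $X_0$ is skew, $X_3$ is symmetric and $X_1=X_2$; it then forms the associated $\pm 1$ circulants $A_i$ and plugs $(x-y)I_n+yA_0,\ yA_1,\ yA_2,\ yA_3$ into the Goethals--Seidel array to get the skew-type $X$, and $yA_3,\ yA_1,\ yA_2,\ (x-y)I_n+yA_0$ into the Balonin array to get the symmetric $Y$. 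The hypothesis $q\equiv 3\pmod 8$ is consumed precisely there: it makes $n=(1+q)/4$ an odd integer, which the quoted difference-family theorem requires. Your bordered Paley construction avoids both the external existence result and the two arrays; I checked the block computations (the identities $Q\mathbf{1}=0$, $QQ^T=S^2=qI_q-J$, and the anticommutation $P_0S=-SP_0$, which holds exactly when $\chi(-1)=-1$) and verified the small cases $q=3,7$, and your argument is sound, so it proves the theorem under the weaker hypothesis $q\equiv 3\pmod 4$. One correction to your closing speculation: the paper spends the mod-$8$ condition in Theorem 1 itself, not in the Hadamard construction; the proof of Theorem 2 uses only a symmetric $OD(1+q;\,1,q)$ together with the matrices $J_q$ and $D_q=(I_q+Q)R$, all of which exist whenever $q\equiv 3\pmod 4$. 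Consequently, if your construction is substituted for the paper's Theorem 1, the main result strengthens automatically: symmetric Hadamard matrices of order $q(1+q)$ would exist for every prime power $q\equiv 3\pmod 4$ (e.g.\ $q=7,23,31,\dots$), which goes beyond what the paper claims --- a point worth raising rather than burying in a guess about where the stronger congruence is needed.
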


\begin{proof}
Since $q\equiv 3 \pmod{8}$ the number $n=(1+q)/4$ is an odd integer.
The main result of the paper \cite{XXSW:2005} can be rephrased as follows:
In the cyclic group $\bZ_n=\bZ/n\bZ$ of order $n$ there 
exists a difference family consisting of four blocks $X_i$, 
$i=0,1,2,3$, with parameters 
$(n;~ k_0=(n-1)/2,k_1,k_2,k_3;~ \lambda)$ such that:

\noindent
(a) $|X_i|=k_i$ for all $i$; \\
(b) $k_0+k_1+k_2+k_3=n+\lambda$; \\
(c) $X_1=X_2$ (and so $k_1=k_2$); \\
(d) the block $X_0$ is skew i.e., $0\notin X_0$ and for each
$i\in X_0$ we have $n-i\notin X_0$; \\
(e) the block $X_3$ is symmetric i.e., $-X_3=X_3$.

Let $x$ and $y$ be two commuting variables. 
For $i=0,1,2,3$ let $A_i$ be the cyclic matrix of order $n$
with first row
$[\alpha_{i,0},\alpha_{i,1},\ldots,\alpha_{i,n-1}]$ where
$\alpha_{i,j}=-1$ if $j\in X_i$ and $\alpha_{i,j}=1$ otherwise.
As $X_1=X_2$ we have $A_1=A_2$.

Let $X$ be the matrix of order $4n$ obtained by plugging the matrices 
         $$(x-y)I_n + yA_0,~ yA_1,~ yA_2,~ yA_3$$
(in that order) into the Goethals-Seidel array. Then we have
$$ XX^T = (x^2+qy^2)I_{4n},$$ 
i.e., $X$ is an $OD(1+q;~1,q)$. Moreover, $X$ is also of skew type.

Let $Y$ be the matrix of order $4n$ obtained by plugging the matrices 
         $$yA_3,~yA_1,~yA_2,~(x-y)I_n+yA_0$$ 
(in that order) into the Balonin array. Then we have 
$$ YY^T = (x^2+qy^2)I_{4n}.$$
Since $A_3$ is a symmetric matrix, it follows that $Y$ is
a symmetric $OD(1+q;~1,q).$
\end{proof}

\noindent
{\bf Example.} As in \cite{XXSW:2005} we take $q=27$ and so $n=7$. 
We shall use the skew difference set $\{1,2,4\}$ 
as each of the blocks $X_0, X_1, X_2$. Further, we set 
$X_3=\{0\}$ and we obtain a difference family with parameters 
$(7;~3,3,3,1; ~3)$.
This difference family satisfies all five conditions (a-e) listed above. 
The first three matrices $A_i$ have the same first row, namely 
$[1,-1,-1,1,-1,1,1]$. The first row of $A_3$ is $[-1,1,1,1,1,1,1]$. 
It is now straightforward to compute the desired OD's $X$ and $Y$ 
having the same parameter set, namely $(28;~ 1,27)$. 
One just needs to plug the circulants 
$$(x-y)I_n + yA_0, yA_1, yA_2, yA_3$$
into the Goethals-Seidel and the Balonin arrays respectively 
(by using the orderings mentioned above). Then we get the desired OD's:
the skew-type OD $X$

\begin{equation}
\left[ \begin{array}{cccc}
       (x-y)I_7+yA_0 & yA_1R         & yA_2R         & yA_3R \\
         -yA_1R      & (x-y)I_7+yA_0 & -yRA_3        & yRA_2 \\
         -yA_2R      & yRA_3         & (x-y)I_7+yA_0 & -yRA_1 \\
         -yA_3R      & -yRA_2        & yRA_1         & (x-y)I_7+yA_0 \\
      \end{array}r
\right] 
\end{equation}

and the symmetric OD $Y$
\begin{equation} 
\left[ \begin{array}{cccc}
       -yA_3 & yA_1R        & yA_2R         & (x-y)R+yA_0R \\
       yA_1R & (x-y)R+yRA_0 & yA_3          & -yRA_2 \\
       yA_2R & yA_3         & -(x-y)R-yRA_0 & yRA_1 \\
(x-y)R+yA_0R & -yRA_2       & yRA_1         & yA_3 \\
      \end{array}
\right].
\end{equation}

\section{A new infinite series of symmetric Hadamard matrices}

\begin{theorem}
If $q\equiv 3 \pmod{8}$ is a prime power then there exists 
a symmetric Hadamard matrix of order $q(1+q)$.
\end{theorem}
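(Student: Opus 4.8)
The plan is to use the symmetric orthogonal design $Y$ constructed in the previous theorem as the backbone of the construction, and to substitute suitable $\pm 1$ matrices for the two variables $x$ and $y$. Recall that $Y$ is a symmetric $OD(1+q;\,1,q)$, meaning $YY^T=(x^2+qy^2)I_{4n}$. The idea is to replace the single variable $x$ by a matrix $U$ and each occurrence of $y$ by a matrix $V$, where $U$ and $V$ are $\pm 1$ matrices of some order $m$ that behave, under multiplication, like commuting scalars satisfying the orthogonality relation. Concretely, I would look for symmetric $\pm 1$ matrices $U,V$ of order $m$ with $UU^T=mI_m$, $VV^T=mI_m$ and $UV^T+VU^T=0$ (i.e. $U,V$ amicable or suitably anti-commuting), so that when substituted into $Y$ the cross terms cancel and the design relation becomes $(1+q)m\,I$, yielding a Hadamard matrix of order $4nm=(1+q)m$.

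First I would pin down the arithmetic so that the final order is exactly $q(1+q)$. Since the design has order $1+q$ and contributes a factor $1+q$, and the weights in the parameter set are $1$ and $q$, the natural choice is to take the matrices substituted for $x$ and $y$ to have order $q$: substituting order-$q$ matrices into an order-$(1+q)$ design produces order $q(1+q)$, matching the target. So the task reduces to exhibiting two symmetric $\pm 1$ matrices of order $q$, say $U$ (substituted for $x$) and $V$ (substituted for $y$), that together realize the orthogonal-design relation with weights $1$ and $q$. Because $q$ is a prime power, the Paley construction is the obvious source: the Jacobsthal matrix $Q$ of the field $\mathbb{F}_q$ together with the all-ones matrix $J$ furnishes matrices of order $q$ with the right quadratic relations, and for $q\equiv 3\pmod 4$ the matrix $Q$ is skew while $I+Q$-type combinations give the needed $\pm 1$ symmetric conference-type blocks.

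The key steps, in order, are: (i) define $U$ and $V$ explicitly in terms of the Paley/Jacobsthal data for $\mathbb{F}_q$ and verify they are symmetric $\pm 1$ matrices of order $q$; (ii) verify the three algebraic identities $UU^T=qI$, $VV^T=qI$, and $UV^T+VU^T=0$ (together with whatever commuting relations are needed so that the substitution into $Y$ is legitimate); (iii) perform the substitution $x\mapsto U$, $y\mapsto V$ in $Y$, checking that the block structure of the Balonin array is compatible with replacing scalars by these matrices, so that $YY^T$ becomes $(UU^T+q\,VV^T)\otimes I_{4n}=q(1+q)I$; and (iv) confirm that the resulting matrix is both $\pm 1$ and symmetric, the latter following from the symmetry of $Y$ as an OD together with the symmetry of $U$ and $V$.

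The hard part will be step (iii): the substitution of noncommuting matrices into an orthogonal design is not automatic, because the OD relation $YY^T=(x^2+qy^2)I$ is derived treating $x,y$ as commuting scalars, and naive replacement by matrices can reintroduce cross terms unless the matrices satisfy precise amicability and commutation conditions dictated by where $x$ and $y$ appear in each block of the array. I expect the main obstacle to be verifying that the specific pattern of transposes and the back-circulant $R$ in the Balonin array interact correctly with $U$, $V$, and their transposes, so that all mixed products cancel and symmetry is preserved; this forces a careful matching of the Paley-matrix symmetry/skewness properties to the sign pattern of the Balonin array, and is where the condition $q\equiv 3\pmod 8$ (via $n=(1+q)/4$ odd and the structure from \cite{XXSW:2005}) must be used.
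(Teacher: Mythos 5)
Your overall strategy---substituting symmetric $\pm 1$ matrices of order $q$ for the variables $x$ and $y$ in the symmetric $OD(1+q;~1,q)$ of Theorem~1---is exactly the paper's, but the algebraic specification you impose on the substituted matrices is impossible to satisfy, so the argument collapses at your step (ii). You ask for symmetric $\pm 1$ matrices $U,V$ of order $q$ with $UU^T=qI_q$, $VV^T=qI_q$ and $UV^T+VU^T=0$. Since $q\equiv 3 \pmod 8$ is odd, the inner product of any two rows of a $\pm 1$ matrix of order $q$ is a sum of $q$ terms $\pm 1$, hence an odd integer, in particular nonzero: thus no $\pm 1$ matrix of odd order $q>1$ satisfies $UU^T=qI_q$ (that would be a Hadamard matrix of odd order), and $UV^T+VU^T=0$ is likewise impossible because it forces the diagonal entries of $UV^T$ (odd integers) to vanish. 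Moreover, even formally the anti-amicability condition is the wrong one: writing $Y=xP+yN$ with disjoint $\{0,\pm1\}$ matrices $P,N$, the OD relation gives $PN^T+NP^T=0$, so after substitution the cross term equals $PN^T\otimes\bigl(UV^T-VU^T\bigr)$; what is needed is $UV^T=VU^T$, i.e.\ amicability, which for symmetric matrices means simply that $U$ and $V$ commute---not $UV^T+VU^T=0$.

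The idea you are missing is that neither Gram matrix need individually be $qI_q$; only the weighted sum matters, and your own formula $(UU^T+q\,VV^T)\otimes I_{4n}=q(1+q)I$ already points at this. The paper takes $U=J_q$, the all-ones matrix (so $UU^T=qJ_q$, a rank-one matrix, nothing like $qI_q$), and $V=D_q=(I_q+Q)R$, where $Q$ is the Paley core and $R$ is the symmetric permutation matrix given by the direct sum of $[1]$ and the back-diagonal identity of order $q-1$; then $D_q$ is a symmetric $\pm1$ matrix with $D_qD_q^T=(q+1)I_q-J_q$, and $J_qD_q=D_qJ_q=J_q$, so the two matrices are commuting and symmetric. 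The off-diagonal blocks of $HH^T$ then vanish by the OD relation together with amicability, and each diagonal block equals $1\cdot qJ_q+q\bigl((q+1)I_q-J_q\bigr)=q(1+q)I_q$: the $J_q$ defects cancel precisely because the OD weights are $1$ and $q$ and $1\cdot q+q\cdot(-1)=0$. This cancellation is the real content of the proof and the reason the parameter set $(1+q;~1,q)$ is what makes the construction work; it is exactly the step your requirement $UU^T=VV^T=qI_q$ attempts to bypass but cannot.
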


\begin{proof}
By Theorem 1 there exists a symmetric OD $Y$ with parameters 
$(1+q:~ 1,q)$ and variables $x$ and $y$. This matrix $Y$ has order $1+q$.
In each row and column of $Y$, the variable $x$ occurs only once while 
$y$ occurs $q$ times. We shall now blow up $Y$ to obtain a symmetric
$\{+1,-1\}$-matrix $H$ of order $q(1+q)$. This is accomplished simply by
replacing each occurrence of $x$ and $y$ in $Y$ by the commuting symmetric 
$\{+1,-1\}$-matrices $J_q$ and $D_q$ of order $q$. $J_q$ is all one matrix,
i.e. all entries of $J_q$ are equal 1. We set
\begin{equation}
D_q=(I_q+Q)R
\end{equation}
where $Q$ is known as the {\em Paley core} matrix and
$R$ is the direct sum of the matrix $[1]$ of order 1
and the back-diagonal identity matrix $S$ of order $q-1$.

To construct the matrix $Q$, we choose
a generator $g$ of the multiplicative group of the finite field
$F_q$ of order $q$. We list the elements of $F_q$ in the following order: 
$$ 0, g^0, g^1, \ldots, g^{q-3}, g^{q-2}. $$
We use this list as labels for the rows and columns of $Q$.
For $r,s\in F_q$ the $(r,s)$ entry of $Q$ is equal to $\chi(r-s)$ where
$\chi$ is the quadratic character of $F_q$. We recall that $\chi(0)=0$ 
and $\chi(g^i)=(-1)^i$.
Obviously $J_q$ is symmetric, and $Q$ is skew-symmetric and $G$-invariant where $G$ is the additive group of $F_q$. 
Since $RQR=Q^T=-Q$, one can easily verify that $D_q$ is also symmetric.
Further, we have $J_q D_q=J_q=D_q J_q$ and so $J_q$ and $D_q$ commute. 
It follows that $H$ is a symmetric $\{+1,-1\}$-matrix.

Let us partition $H$ into blocks of size $q$, and do the same for $H^T$ and
the product $HH^T$. Since $Y$ is an OD, it is obvious that all off-diagonal
blocks of $HH^T$ are zero and all diagonal blocks are equal.

To finish the proof, it remains to show that $H$ is a Hadamard matrix. 
Since $Y$ is an OD and $J_q$ and $D_q$ commute all off-diagonal blocks
of $HH^T$ vanish. Note that all the diagonal blocks of $HH^T$ are equal.
The inner product of any two rows of $J_q$ is equal $q$. On the other hand, 
the inner product of any two rows of $D_q$ is equal to $-1$.
This follows from the fact that $QQ^T=qI_q-J_q$ (see \cite[Chapter 18]
{LintWilson:1992}). Indeed, since $R^2=I_q$ we have

$$ D_q D_q^T=(I_q+Q)(I_q-Q)=(q+1)I_q-J_q.$$

Hence the inner product of any two rows of $H$ selected from the first
block-row of $HH^T$ is equal 0 because $1 \cdot q+q \cdot (-1)=0$.
(Recall that in each row of $Y$ the variable $x$ occurs only once and $y$
occurs $q$ times.) Hence the first block of $HH^T$ is a diagonal matrix.
Clearly this block is equal to $q(1+q)I_q$. Since all diagonal blocks of $HH^T$
are equal to each other, the proof is completed.
\end{proof}

By using Maple (Maplesoft, a division of Waterloo Maple Inc., 2025, Waterloo, 
Ontario) we have constructed explicitly symmetric Hadamard matrices
of order $q(1+q)$ for $q=3,11,19,27,43,59,67,107,131$.

\section{Addendum on Legendre pairs of length $111$}

Legendre pairs (LegP) are difference families in finite abelian groups of
odd order $v$ consisting of two blocks, both of size $(v-1)/2$. Hence its
parameters are $(v; (v-1)/2,(v-1)2; (v-3)/2)$. If the group is cyclic, we
say that its LegP are {\em cyclic}. In the case of cyclic LegP it is customary
to refer to $v$ as the {\em length} of the LegP. It is conjectured that
cyclic LegP exist for all odd $v$.
There are four known infinite series of cyclic LegP (see \cite{FGS:2001}). 
One of them uses so called Szekeres difference sets. For $v=111$ the Szekeres
difference sets $M$ and $N$ exist and form the following LegP:
$$\displaylines{
M=\{[6,9,11,12,13,16,22,23,30,31,32,34,35,38,39,40,43,45,50,52,54,56,58, \hfill\cr
60,62,63,64,65,67,69,70,74,75,78,82,83,84,85,86,87,90,91,92,93,94,96,97, \hfill\cr
101,103,104,106,107,108,109,110\}, \hfill \cr
N=\{0,1,2,4,5,6,7,10,11,14,16,18,21,24,30,31,32,33,35,36,39,43,44,45,49, \hfill\cr
51,52,54,57,59,60,62,66,67,68,72,75,76,78,79,80,81,87,90,93,95,97,100, \hfill\cr
101,104,105,106,107,109,110\}. \hfill \cr}
$$
Note that $M$ is skew while $N$ is symmetric. Until recently this was the unique 
(up to equivalence) known example of LegP of length $111$.
Nick (short for Nikolay) sent me (see \cite{Nick:2025}) the following LegP that 
he found:
$$\displaylines{
\{0,4,5,6,8,9,12,14,21,22,23,25,26,28,29,31,32,33,38,40,41,45,47,50,51, \hfill\cr
53,54,55,56,58,60,61,66,67,68,72,77,80,81,83,86,88,90,91,92,96,98,99,102, \hfill\cr
103,104,105,106,108,109\}, \hfill \cr
\{1,2,5,8,9,10,11,12,15,16,17,20,22,23,25,27,28,31,32,35,36,37,39,41,46, \hfill\cr
48,49,50,51,52,54,56,57,58,59,66,72,76,77,80,88,89,90,91,92,94,96,98,100,\hfill\cr
101,103,104,105,109,110\}. \hfill \cr}
$$
I verified that these two LegP are not equivalent. Subsequently I used the subgroup
$H=\{1,10,100\}$ to search for LegP of length $111$ where each of the blocks is
$H$-invariant. I found 13 nonequivalent LegP. To save space,
we shall use the list of $H$-orbits. Apart from the trivial orbit $\{0\}$
there are two more singleton orbits, namely $\{37\}$ and $\{74\}$, 
and 36 orbits of size 3. We use the 
integers $0,1,2,\ldots,37$ to label the 38 nontrivial orbits. Note that the
$H$-orbit with label 0 is the subgroup $H$ itself.

\begin{table}[!h]
\centering
\caption{Labelling of $H$-orbits}
\begin{tabular}{|r|l|r|l|r|l|}
	\hline
 0) & 1,10,100 & 1) & 11,101,110 & 2) & 2,20,89 \\
 3) & 22,91,109 & 4) & 3,30,78 & 5) & 33,81,108 \\ 
 6) & 4,40,67 & 7) & 44,71,107 & 8) & 5,50,56 \\
 9) & 55,61,106 & 10) & 6,45,60 & 11) & 51,66,105 \\
12) & 7,34,70 & 13) & 41,77,104 & 14) & 8,23,80 \\
15) & 31,88,103 & 16) & 9,12,90 & 17) & 21,99,102 \\
18) & 13,19,79 & 19) & 32,92,98 & 20) & 14,29,68 \\
21) & 43,82,97 & 22) & 15,39,57 & 23) & 54,72,96 \\
24) & 16,46,49 & 25) & 62,65,95 & 26) & 17,35,59 \\
27) & 52,76,94 & 28) & 18,24,69 & 29) & 42,87,93 \\
30) & 25,28,58 & 31) & 53,83,86 & 32) & 26,38,47 \\
33) & 64,73,85 & 34) & 27,36,48 & 35) & 63,75,84 \\
36) & 37 & 37) & 74 &&\\
	\hline
\end{tabular}
\end{table}

Now we can list our 13 LegP by using the above labels. First each label in the list
below should be replaced by the corresponding orbit. Second, each block in
all 13 solutions contains the trivial orbit $\{0\}$ and must be added
separately.

$$\displaylines{
 1)\hspace{.1in} [[0,1,2,3,4,8,9,10,11,14,16,19,20,24,29,32,33,35],  \hfill\cr
\hspace{.5in}     [0,2,6,7,9,11,13,16,18,19,22,23,25,27,28,29,30,33]],   \hfill\cr
 2)\hspace{.1in}  [[1,2,3,9,10,12,13,16,17,22,25,26,27,28,31,32,33,35], \hfill\cr,
\hspace{.5in}     [0,1,3,4,5,6,8,13,14,17,18,20,21,27,28,33,34,35]],    \hfill\cr
 3)\hspace{.1in}  [[0,1,2,6,7,9,10,11,14,15,16,19,22,29,31,32,33,35],  \hfill\cr
\hspace{.5in}     [2,8,9,10,11,12,13,15,16,19,21,23,24,26,27,28,29,33]], \hfill\cr
 4)\hspace{.1in}  [[2,3,10,11,12,16,19,20,21,23,26,27,28,29,31,32,33,35], \hfill\cr
\hspace{.5in}     [0,1,3,4,8,9,11,13,14,18,19,21,22,24,25,27,28,29]],     \hfill\cr
 5)\hspace{.1in}  [[0,3,4,5,6,7,9,10,11,14,16,17,25,26,31,32,33,35],   \hfill\cr
\hspace{.5in}     [3,4,6,8,14,15,19,20,21,22,23,24,25,26,27,29,31,35]],   \hfill\cr
 6)\hspace{.1in}  [[4,7,10,13,14,15,16,17,19,24,27,28,29,30,31,32,33,35],  \hfill\cr
\hspace{.5in}     [1,3,4,8,9,11,13,14,15,16,18,20,21,26,27,29,31,35]],    \hfill\cr
 7)\hspace{.1in}  [[0,2,3,7,8,9,12,14,15,16,22,24,26,27,28,29,32,35],  \hfill\cr
\hspace{.5in}     [4,5,7,8,12,14,15,17,19,23,26,27,28,29,30,31,33,35]],   \hfill\cr
 8)\hspace{.1in}  [[0,1,2,5,6,7,10,12,13,14,17,19,22,24,25,29,30,33],  \hfill\cr
\hspace{.5in}     [0,2,4,8,9,10,11,12,13,14,15,16,17,21,22,26,29,31]],    \hfill\cr
 9)\hspace{.1in}  [[1,2,3,4,5,8,12,13,15,20,21,22,23,24,27,29,33,35],  \hfill\cr
\hspace{.5in}     [0,4,5,8,9,13,15,17,18,19,21,23,24,26,27,28,29,31]],    \hfill\cr
10)\hspace{.1in}  [[1,5,6,8,9,13,14,15,16,17,22,24,25,27,30,31,32,35], \hfill\cr
\hspace{.5in}     [0,1,4,5,11,12,13,16,17,18,19,20,25,27,28,32,33,35]],   \hfill\cr
11)\hspace{.1in}  [[0,4,5,7,9,10,12,13,14,18,19,22,24,25,30,31,33,35], \hfill\cr
\hspace{.5in}     [0,3,6,8,10,11,13,14,15,16,17,19,23,25,26,27,28,29]],    \hfill\cr
12)\hspace{.1in}  [[0,2,4,6,7,10,11,14,18,19,21,23,28,30,31,33,34,35], \hfill\cr
\hspace{.5in}     [3,5,6,7,8,12,13,14,21,23,26,29,30,31,32,33,34,35]],     \hfill\cr
13)\hspace{.1in}  [[1,3,4,7,8,9,10,11,12,14,15,20,21,24,27,29,34,35],  \hfill\cr
\hspace{.5in}      [0,1,2,3,8,11,13,15,17,18,19,24,26,28,29,31,34,35]].  \hfill\cr}
$$

My LegP number 13 is equivalent the one found by Nick.
Since my search covered only a small portion of the whole search space,
it appears that the number of equivalence classes of LegP of length $111$ is huge.
For exhaustive searches of cyclic LegP of lengths $\le 47$ see 
\cite[Table 3]{FGS:2001}.

\section{Acknowledgements}

This research was enabled in part by support provided by Sharcnet (https://shar
cnet.ca) and Compute Canada (www.computecanada.ca).

\end {document}